\documentclass[11pt]{amsart}

\usepackage[T1]{fontenc}
\usepackage{mathpazo}
\usepackage{amssymb}
\usepackage{microtype}
\usepackage{mathtools}
\usepackage[hidelinks]{hyperref}

\newcommand{\MCG}{\operatorname{MCG}}
\newcommand{\Sp}{\operatorname{Sp}}
\newcommand{\Stab}{\operatorname{Stab}}
\newcommand{\im}{\operatorname{im}}
\newcommand{\irank}{\operatorname{irank}}
\newcommand{\F}{\mathbb F}
\newcommand{\Z}{\mathbb Z}

\newtheorem{theorem}{Theorem}[section]
\newtheorem{lemma}[theorem]{Lemma}
\newtheorem{corollary}[theorem]{Corollary}
\theoremstyle{remark}
\newtheorem{remark}[theorem]{Remark}

\title{The Involution Rank of $\MCG(\Sigma_3)$ Is Four}
\author{Qilong Guo}
\address{College of Science, China University of Petroleum (Beijing)\\
No. 18 Fuxue Road, Changping District, Beijing 102249, China}
\email{guoqilong1984@hotmail.com}

\author{Dongqi Sun\textsuperscript{*}}
\address{College of Mathematical Sciences, Harbin Engineering University, Harbin 150001, P. R. China}
\email{sundq1029@hrbeu.edu.cn}
\thanks{\textsuperscript{*}Corresponding author.}

\subjclass[2020]{Primary 57K20; Secondary 20F65, 20D06}
\keywords{mapping class group, involution, symplectic group, minimal generation}

\begin{document}

\begin{abstract}
For the closed orientable surface $\Sigma_3$, we prove
\[
\irank\MCG(\Sigma_3)=4.
\]
Thus $\MCG(\Sigma_3)$ can be generated by four involutions, but not by three. This settles the genus-three case of Farb's problem on generation by involutions.
\end{abstract}

\maketitle

\section{Introduction}

An involution is an element of order exactly two. If a group $G$ can be generated by involutions, define its \emph{involution rank} by
\[
\irank(G)=\min\bigl\{n:G=\langle s_1,\ldots,s_n\rangle,\ s_i^2=1,\ s_i\ne1\bigr\}.
\]
Let $\Sigma_g$ be a closed connected oriented surface of genus $g$, and let $\MCG(\Sigma_g)$ denote its orientation-preserving mapping class group. Farb asked for sharp bounds on the number of involutions needed to generate $\MCG(\Sigma_g)$ \cite[Problem~5.8]{Farb}. Korkmaz proved that four involutions suffice for every $g\ge3$ and that three suffice for $g\ge8$ \cite{Korkmaz}. Y{\i}ld{\i}z later reduced the latter range to $g\ge6$ \cite{Yildiz}. A recent survey records $g=3,4,5$ as the remaining low-genus cases \cite[Question~4.5]{Survey}. The results of Guo \cite{GuoCompanion} for $g=4,5$ and Y{\i}ld{\i}z \cite{Yildiz} for $g\ge6$ together show that $\MCG(\Sigma_g)$ is generated by three involutions for every $g>3$. In this paper, we prove that $\MCG(\Sigma_3)$ cannot be generated by three involutions.

\begin{theorem}\label{thm:main}
The involution rank of the genus-three mapping class group is four:
\[
\irank\MCG(\Sigma_3)=4.
\]
\end{theorem}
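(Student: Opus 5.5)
The upper bound $\irank\MCG(\Sigma_3)\le4$ is Korkmaz's theorem that four involutions generate $\MCG(\Sigma_g)$ for every $g\ge3$ \cite{Korkmaz}. So everything rests on the lower bound: we must show that no three involutions $s_1,s_2,s_3$ generate $\MCG(\Sigma_3)$. Any generating triple would map onto any quotient. The plan is therefore to push such a triple to a finite quotient through the symplectic representation $\MCG(\Sigma_3)\to\Sp(6,\Z)\to\Sp(6,\F_p)$, and to derive a contradiction there from linear-algebraic constraints on the images of involutions.

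\emph{Step 1: classify the images of involutions.} By Nielsen realization, each involution $s$ is represented by an isometric involution of $\Sigma_3$ with quotient of genus $h$, and Riemann--Hurwitz forces $h\in\{0,1,2\}$. These three cases are the hyperelliptic involution ($h=0$, $8$ fixed points), an involution with $4$ fixed points ($h=1$), and a free involution ($h=2$). The image $A$ of $s$ in $\Sp(6,\Z)$ has $(+1)$-eigenspace of rank $2h$ and $(-1)$-eigenspace of rank $d=6-2h$. Thus $A=-I$ when $h=0$, and otherwise $d\in\{2,4\}$. The same eigenspace dimensions persist after reducing mod $3$, where $A$ stays diagonalizable.

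\emph{Step 2: dispose of the hyperelliptic case.} If some $s_i$ is hyperelliptic, its image is the central element $-I$. Then $\mathrm{PSp}(6,\F_3)$ would be generated by the images of the other two involutions. That would make it dihedral, which is absurd. Hence all three $s_i$ have $d_i\in\{2,4\}$.

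\emph{Step 3: Scott's lemma on a suitable module.} Let $x_i$ be the image of $s_i$ in $G=\Sp(6,\F_3)$ and put $x_4=(x_1x_2x_3)^{-1}$. Scott's lemma says the following. If $M$ is an $\F_3G$-module with $M^G=0$ and $(M^*)^G=0$, then $\sum_{i=1}^4\operatorname{codim}M^{x_i}\ge2\dim M$. Applied to the natural module $V=\F_3^6$ this yields only $d_1+d_2+d_3+\operatorname{codim}V^{x_4}\ge12$. That is satisfiable, so we pass to the adjoint module $M=\mathfrak{sp}_6(\F_3)$, which is irreducible and self-dual, of dimension $21$. An involution with $(\pm1)$-eigenspaces of dimensions $6-d$ and $d$ centralizes $\mathfrak{sp}_{6-d}\oplus\mathfrak{sp}_d$. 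Its fixed space therefore has codimension $21-\binom{7-d}{2}-\binom{d+1}{2}=8$ for both $d=2$ and $d=4$. For any $x_4$, the fixed space in $M$ has dimension at least $3$, the rank of the group. So $\sum\operatorname{codim}\le 8+8+8+18=42=2\dim M$. This is exactly borderline, so Scott's lemma alone does not conclude.

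\emph{Step 4: exclude the equality case (main obstacle).} The hard part is the extremal configuration. Equality forces $x_4$ to be regular semisimple in the adjoint sense, i.e.\ $\dim C_M(x_4)=3$. It also forces the $\F_3$-cohomological (Scott) inequality to be sharp. I would first try to rule this out by combining several constraints. The first is a rationality/regularity analysis of which regular semisimple classes of $\Sp(6,\F_3)$ can occur as $(x_1x_2x_3)^{-1}$ for three involutions of the listed types; their characteristic polynomials are constrained by traces of products of involutions. The second is a refined version of Scott's bound that accounts for the $H^1$ term in the sharp case. If these fail, the fallback is a different module: the $14$-dimensional irreducible constituent of $\Lambda^2V$, or the natural module over $\F_5$. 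In those the involution codimensions ($d(6-d)$-type counts) may strictly beat the bound. Another option is a character-theoretic count of generating triples of the three involution classes in $\Sp(6,\F_3)$ via the Frobenius structure-constant formula, showing that all such triples lie in proper subgroups. Once some finite quotient admits no generating triple of involutions of these types, the contradiction gives $\irank\MCG(\Sigma_3)\ge4$, and Theorem~\ref{thm:main} follows.
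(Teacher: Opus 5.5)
There is a genuine gap: the proposal does not prove the lower bound $\irank\MCG(\Sigma_3)\ge4$. Your Steps 1--3 are essentially sound. The eigenspace ranks do survive reduction mod $3$, since $(1\pm A)/2$ are defined over $\Z[1/2]$, and the hyperelliptic case is disposed of correctly. But these steps only reach the borderline count $8+8+8+18=42=2\dim M$ on the adjoint module. Step 4 is a list of things to try, none carried out.

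The fallbacks you name do not work:
\begin{itemize}
\item On the natural module over $\F_5$ you get $4+4+4+6=18\ge12$, so no contradiction.
\item On $\Lambda^2V$, an involution with $(-1)$-eigenspace $E_-$ of dimension $d$ has $(-1)$-eigenspace $E_-\otimes E_+$ of dimension $d(6-d)=8$ for $d=2,4$. On the irreducible constituent you therefore get $24+14\ge28$ for $p\ge5$, and $24+13\ge26$ for $p=3$. Note the constituent is $13$-dimensional when $p=3$, because $3\mid n$.
\item Even the $14$-dimensional summand of $\Lambda^3V$ only helps partially. There $d=4$ involutions have fixed-space codimension $4$, but $d=2$ involutions have codimension $10$. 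So it excludes only triples consisting entirely of $h=1$ involutions; mixed triples such as $4+4+10+c_4\ge28$ are not ruled out.
\end{itemize}
At odd primes the two involution types stay distinct, and in these small modules the $d=2$ type is never small enough for Scott's inequality to bite (codimension below a third of the dimension). You give no reason to believe three such elements cannot generate $\Sp(6,\F_3)$, so $p=3$ may simply carry no obstruction.

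The missing idea is to reduce modulo $2$, where the sign distinction collapses. By Braden's normal form, every involution in $\Sp(6,\Z)$ is conjugate to $\operatorname{diag}(a,a)$ with $a=I_p\oplus(-I_m)\oplus Q^{\oplus t}$ and $t\in\{0,1\}$. Its image in $\Sp(6,\F_2)$ is therefore either trivial or lies in a single conjugacy class. That class is $2B$: the centralizer of $X$ equals the stabilizer of the isotropic plane $\im(X-I)$, which has order $4608$.

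The $15$-dimensional complex irreducible character of $\Sp(6,\F_2)$ has $\chi(2B)=7$. So each such image has fixed-space codimension $4<15/3$. Scott's inequality then gives $30\le4+4+4+15=27$, a contradiction. This small codimension ratio, which your $d=2,4$ classes at $p=3$ do not have in the modules you consider, is what makes the paper's obstruction work.
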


\begin{remark}\label{rem:exceptional}
The genus-three case is different in nature from the higher-genus cases. The results for $g>3$ are obtained by constructing involutory generating sets, whereas the proof here is an obstruction coming from a finite symplectic quotient.
\end{remark}

\begin{corollary}\label{cor:Farb}
Together with the results of Guo \cite{GuoCompanion} and Y{\i}ld{\i}z \cite{Yildiz}, Theorem~\ref{thm:main} settles Farb's problem for closed orientable surfaces of genus at least three. More precisely,
\[
\irank\MCG(\Sigma_g)=
\begin{cases}
4,&g=3,\\
3,&g\ge4.
\end{cases}
\]
\end{corollary}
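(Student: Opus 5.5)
The corollary is an assembly of three inputs, so the plan is to split the range of $g$ into $g=3$, $g\in\{4,5\}$ and $g\ge 6$, and handle each with the matching result, using Korkmaz's theorem \cite{Korkmaz} to justify that the involution rank is defined at all. The mapping class group is nontrivial and non-abelian, so it is not generated by fewer than three involutions. Indeed, one involution generates a group of order two, and two involutions generate a dihedral group. For $g\ge3$, $\MCG(\Sigma_g)$ is not a quotient of $\Z/2*\Z/2$: such a quotient has a cyclic subgroup of index at most two, whereas $\MCG(\Sigma_g)$ contains free abelian subgroups of rank $3g-3\ge 6$ generated by Dehn twists about disjoint curves. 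Hence $\irank\MCG(\Sigma_g)\ge3$ for all $g\ge3$.

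For $g=3$, the value $4$ is exactly Theorem~\ref{thm:main}. Concretely, the upper bound comes from Korkmaz's four-involution generating set. The lower bound is the obstruction via the finite symplectic quotient $\MCG(\Sigma_3)\to\Sp_6(\Z)\to\Sp_6(\F_2)$ that the paper proves.

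For $g\ge4$, the lower bound $3$ is the one just shown. The matching upper bound comes from explicit generating sets. For $g=4$ and $g=5$, Guo \cite{GuoCompanion} gives three involutions generating $\MCG(\Sigma_g)$. For $g\ge6$, Y{\i}ld{\i}z \cite{Yildiz} does so, superseding Korkmaz's range $g\ge8$. Combining the two bounds gives $\irank\MCG(\Sigma_g)=3$ for $g\ge4$, and together with the genus-three case this yields the displayed formula.

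There is no genuine obstacle here beyond Theorem~\ref{thm:main} itself, which carries all the difficulty. The only points needing care are bookkeeping. First, the cited results must cover every $g\ge4$ without gaps, with $\{4,5\}$ and $\{g\ge6\}$ together giving all of $g\ge4$. Second, the cited results must concern the same group, namely the orientation-preserving mapping class group of the closed surface, and must use the same notion of involution (order exactly two). Third, the trivial lower bound must be stated explicitly, so that the value $3$ for $g\ge4$ is sharp rather than merely an upper bound.
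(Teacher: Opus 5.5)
Your proposal is correct and matches the paper's assembly: Theorem~\ref{thm:main} for $g=3$, Guo for $g\in\{4,5\}$, and Y{\i}ld{\i}z for $g\ge6$. The only addition is that you make explicit the lower bound $\irank\MCG(\Sigma_g)\ge3$, which the paper leaves implicit. Note that it is your argument that a quotient of $\Z/2*\Z/2$ is virtually cyclic while $\Z^{3g-3}\le\MCG(\Sigma_g)$ that rules out two involutions, not mere non-commutativity as your opening sentence suggests.
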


The proof of Theorem~\ref{thm:main} is by obstruction rather than construction. We use the mod-$2$ symplectic quotient $\Phi_2:\MCG(\Sigma_3)\twoheadrightarrow\Sp(6,\F_2)$. Braden's integral normal form shows that the image of an involution is either trivial or lies in the ATLAS class $2B$. We then use a $15$-dimensional irreducible complex representation of $\Sp(6,\F_2)$ and Scott's fixed-space inequality to show that three elements of $\{1\}\cup2B$ cannot generate $\Sp(6,\F_2)$. This gives the lower bound in Theorem~\ref{thm:main}; Korkmaz's theorem gives the upper bound. Section~2 records the symplectic and finite-group facts used in the proof, and Section~3 proves the theorem.

The same argument may be useful in other minimal torsion generation problems. The general idea is to pass to a finite quotient in which the relevant torsion elements lie in a small number of conjugacy classes, and then use fixed-space data in finite-dimensional representations to rule out small generating sets.

\section{Symplectic and finite-group input}

We use three ingredients. First, we recall the case of Braden's normal form needed below.

\begin{lemma}[Braden]\label{lem:Braden}
Let $S\in\Sp(6,\Z)$ satisfy $S^2=I_6$. Then $S$ is conjugate in $\Sp(6,\Z)$ to
\[
S_0=\begin{pmatrix}a&0\\0&a\end{pmatrix},
\qquad
a=I_p\oplus(-I_m)\oplus Q^{\oplus t},
\qquad
Q=\begin{pmatrix}0&1\\1&0\end{pmatrix},
\]
where $p,m,t\ge0$ and $p+m+2t=3$.
\end{lemma}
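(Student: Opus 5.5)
We view $S$ as an involution of the symplectic lattice $L=\Z^6$ and decompose $L$ as a $\Z[\langle S\rangle]$-module compatibly with the form $\omega$. We prove the conjugacy statement by induction on the genus, i.e.\ by splitting off symplectic planes or $4$-dimensional blocks invariant under $S$, rather than by quoting a general classification.

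\emph{Step 1: eigenlattices.} Let $L_+=\ker(S-1)$ and $L_-=\ker(S+1)$. Both are primitive sublattices. For $x,y\in L_\pm$ we have $\omega(x,y)=\omega(Sx,Sy)=\pm\omega(x,y)$ in the appropriate sign combination, so $L_+\perp L_-$. Moreover $2L\subseteq L_+\oplus L_-$, since $2x=(x+Sx)+(x-Sx)$. The quotient $L/(L_+\oplus L_-)$ is an elementary abelian $2$-group $(\Z/2)^t$; this $t$ will be the number of $Q$-blocks. Over $\mathbb Q$ the form restricts nondegenerately to $L_+\otimes\mathbb Q$ and to $L_-\otimes\mathbb Q$. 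Hence both ranks are even; write them as $2(p+t)$ and $2(m+t)$.

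\emph{Step 2: splitting off blocks.} First we find an $S$-invariant symplectic sublattice on which $S$ looks like $I_1\oplus I_1$, $-I_1\oplus-I_1$, or the $4$-dimensional block $Q\oplus Q$ in the basis $(e_1,e_2,f_1,f_2)$. Such a sublattice $M$ is unimodular, so $L=M\oplus M^\perp$ with $M^\perp$ being $S$-invariant and symplectic of smaller rank, and induction finishes. To construct $M$ we split by cases.

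Case (a): some $x\in L$ has $x+Sx\notin 2L$ or similar, namely there is a primitive $u\in L_+$ with $\omega(u,L)=\Z$. Then take $v$ with $\omega(u,v)=1$ and replace $v$ by a suitable combination of $v$ and $Sv$ to get either a pair $u,v\in L_+$ with $\omega=1$ (an $I$-plane) or the situation $\omega(u,v)=1$, $Sv=-v+u'$. In the latter case we produce a $Q\oplus Q$ block spanned by $v,Sv$ and dual vectors. The analogous case with $L_-$ gives $-I$ planes.

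Case (b): if $\omega(L_+,L)\subseteq 2\Z$ for every primitive element, then $L_+$ is orthogonal mod $2$ to everything, which forces $L_+\subseteq 2L^{\#}$. This contradicts primitivity unless $L_+=0$. A similar argument applies to $L_-$.

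The block form $\begin{pmatrix}a&0\\0&a\end{pmatrix}$ in the statement is simply the reordering of the symplectic basis $e_1,e_2,e_3,f_1,f_2,f_3$ after these splittings. It requires that each $Q$ acts as $e_i\leftrightarrow e_j$, $f_i\leftrightarrow f_j$, which is symplectic.

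\emph{Main obstacle.} The hard step is the local analysis in Case (a): producing a $4$-dimensional unimodular $S$-invariant block of type $Q\oplus Q$ when $x$ and $Sx$ are not congruent to eigenvectors mod $2$. For this I would work in $\bar L=L/2L$ over $\F_2$, where $\bar S$ is a symplectic involution. I would first show that $(\bar S-1)\bar L$ is totally isotropic of dimension $t$; this is the mod-$2$ invariant matching $t$. Then I would pick $x$ with $\bar x\notin\ker(\bar S-1)$ and $y$ with $\omega(x-Sx,y)$ odd, and lift to obtain the required block over $\Z$, adjusting by elements of $L_\pm$ to make the Gram matrix standard.

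If this direct construction becomes messy, the fallback is to cite Braden's classification of conjugacy classes of involutions in $\Sp(2g,\Z)$, as the lemma's attribution suggests, and specialize it to $g=3$.
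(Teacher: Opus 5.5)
The paper gives no argument of its own for this lemma. It quotes it as the $g=3$ case of Braden's Theorem~1.1, which is exactly your fallback. Your direct argument, however, has a genuine gap at the step you yourself flag.

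\textbf{Where the argument breaks.}

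\emph{The case split is empty.} Since $\omega$ is unimodular on $L$, every primitive $u$ satisfies $\omega(u,L)=\Z$. So Case (b) never occurs, and Case (a) carries the whole proof.

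\emph{Case (a) does not work as written.} Suppose $u\in L_+$, $\omega(u,v)=1$ and $v\notin L_+$. Any combination $av+bSv$ lying in $L_+$ must have $a=b$, so it pairs with $u$ to $2a$; no combination of $v,Sv$ gives an $I$-plane. The alternative ``$Sv=-v+u'$'' is a tautology ($u'=v+Sv$). It gives no reason why $v,Sv$ should extend to a $Q\oplus Q$ block.

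\emph{A concrete failure.} Let $L=\langle e_0,f_0\rangle\perp\langle e_1,e_2,f_1,f_2\rangle$, with $S=1$ on the first summand and $S$ swapping $e_1\leftrightarrow e_2$, $f_1\leftrightarrow f_2$ on the second. Take $u=e_0+e_1+e_2$ and $v=f_0+3(f_1-f_2)$. Then $\omega(u,v)=1$, yet $v-Sv=6(f_1-f_2)$. So no $f$ can satisfy $\omega(v,f)=1$ and $\omega(Sv,f)=0$, and neither alternative of your procedure yields a block.

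\emph{The mod-$2$ sketch has the same defect.} Conditions such as ``$\omega(x-Sx,y)$ odd'' do not lift to the exact integral Gram conditions. The reason is that $\langle x,Sx\rangle$ can be imprimitive at odd primes. For example, $x=f_1+2f_2$ above has $x-Sx\notin 2L$ but $x+Sx=3(f_1+f_2)$.

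\textbf{What is missing: the right invariant and a primitivity step.} Split on $\gamma=\dim_{\F_2}(\bar S-1)\bar L$.

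\emph{If $\gamma=0$:} then $(S-1)L\subseteq 2L$ forces $L=L_+\perp L_-$ with both summands unimodular. Symplectic bases of each summand finish the proof.

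\emph{If $\gamma>0$:}
\begin{enumerate}
\item Choose $x$ with $x-Sx\notin 2L$; then also $x+Sx\notin 2L$.
\item Replacing $x$ by $x+z_++z_-$ with $z_\pm\in L_\pm$ changes $x\pm Sx$ by $2z_\pm$. Hence you can make $x+Sx$ primitive in $L_+$ and $x-Sx$ primitive in $L_-$; call the result $e$.
\item Since $[L:L_+\oplus L_-]$ is a power of $2$, $e$ and $Se$ are then independent modulo every odd prime. The condition $e-Se\notin 2L$ makes them independent modulo $2$. Hence $\langle e,Se\rangle$ is primitive.
\item Unimodularity now supplies $f$ with $\omega(e,f)=1$ and $\omega(Se,f)=0$.
\item Then $\langle e,Se,f,Sf\rangle$ is a unimodular $S$-invariant block on which $S$ acts as $Q\oplus Q$. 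Split it off and induct.
\end{enumerate}

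\textbf{Correct the count.} In fact $L/(L_+\oplus L_-)\cong(\Z/2)^{2t}$ and $\dim(\bar S-1)\bar L=2t$, not $t$. For $t=1$ this is the $2$-dimensional $W$ of Lemma~\ref{lem:centralizer-stabilizer}.
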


This is the specialization of \cite[Theorem~1.1]{Braden} to $\Sp(6,\Z)$. Since $p+m+2t=3$, one has $t\in\{0,1\}$. After reduction modulo $2$, the case $t=0$ gives the identity, while $t=1$ gives one nontrivial involution type.

We next record the finite-group data used to identify this type. Let $G=\Sp(6,\F_2)$, the symplectic group of a six-dimensional vector space over $\F_2$. It has order $1451520$ and four conjugacy classes of involutions, denoted $2A,2B,2C,2D$, with centralizer orders
\[
\begin{array}{c|cccc}
\text{class}&2A&2B&2C&2D\\ \hline
|C_G(x)|&23040&4608&1536&384.
\end{array}
\]
See the ATLAS tables \cite{Atlas}. In particular, an involution whose centralizer has order $4608$ belongs to the class $2B$.

The same character table gives the representation used below.

\begin{lemma}\label{lem:2B-fixed-space}
There is a $15$-dimensional irreducible complex $G$-module $U$ such that
\[
\operatorname{codim}_U U^x=4
\]
for every $x\in2B$.
\end{lemma}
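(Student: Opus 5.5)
The plan is to reduce the statement to a single character value and read that value from the ATLAS character table of $G=\Sp(6,\F_2)$ \cite{Atlas}, which the paper already cites for the involution classes.

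\emph{Reduction.} Let $x$ be an involution acting on a complex $G$-module $U$ with character $\chi$. Since $x^2=1$, $x$ is diagonalizable with eigenvalues $\pm1$. The fixed space $U^x$ is the $(+1)$-eigenspace, so $\dim U^x-\dim U^{-}=\chi(x)$ and $\dim U^x+\dim U^{-}=\chi(1)$. Hence
\[
\operatorname{codim}_U U^x=\frac{\chi(1)-\chi(x)}{2}.
\]
With $\chi(1)=15$, the condition $\operatorname{codim}_U U^x=4$ is equivalent to $\chi(x)=7$. The lemma therefore amounts to exhibiting an irreducible character $\chi$ of $G$ of degree $15$ with $\chi(2B)=7$. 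This value is automatically the same for every $x\in 2B$, because characters are class functions.

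\emph{Locating the character.} I would go to the ATLAS table of $S_6(2)$. The irreducible degrees begin $1,7,15,21,21,27,35,35,\dots$, so there is a unique irreducible character $\chi_3$ of degree $15$. I would first identify the column labelled $2B$ using the centralizer orders listed above: the class with $|C_G(x)|=4608$ is $2B$. I then read off $\chi_3(2B)$ and expect to find $7$. From memory, the four involution values of $\chi_3$ are $(-5,7,3,-1)$ on $(2A,2B,2C,2D)$, which would give codimensions $10,4,6,8$ respectively.

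\emph{Sanity checks.} As a check independent of the printed table, I would verify three things.
\begin{itemize}
\item The value $7$ is odd, as it must be, since $15\pm\chi(x)$ must be even.
\item The inner product $\langle\chi_3,\chi_3\rangle=1$ is consistent with the involution values and the class sizes $|G|/|C_G(x)|$.
\item Restricting $\chi_3$ to a Sylow $2$-subgroup, or to the subgroup $S_8\cong O_6^+(2)$, gives a decomposition into characters with nonnegative integer multiplicities.
\end{itemize}
A concrete model could also serve as a cross-check. For instance, I would use $W(E_7)\cong\Z/2\times G$, with modules built from the $7$-dimensional reflection representation; or the permutation characters on the $28$ bitangents and $36$ even theta characteristics, which decompose as $1+27$ and $1+35$. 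From such a model one can compute a trace on an explicit $2B$ element, namely a transvection-type product whose centralizer has order $4608$.

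\emph{Main obstacle.} The only real difficulty is bookkeeping. One must match the class that has centralizer order $4608$ with the correct ATLAS column, and confirm that the degree-$15$ character takes the value $7$ there rather than on another involution class. I would settle this by the class-size and orthogonality check described above. Once $\chi_3(2B)=7$ is confirmed, the displayed formula gives codimension $4$ immediately.
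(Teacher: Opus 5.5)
Your proposal is correct and follows the paper's proof: both use the $\pm1$-eigenspace decomposition to get $\operatorname{codim}_U U^x=(\chi(1)-\chi(x))/2$, then take the value $\chi(2B)=7$ for the degree-$15$ irreducible character from the ATLAS. The involution values you recalled, $(-5,7,3,-1)$ on $(2A,2B,2C,2D)$, are right, but a norm check $\langle\chi_3,\chi_3\rangle=1$ needs the values on all $30$ classes, not only the involution ones.
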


\begin{proof}
The ATLAS character table contains an irreducible character $\chi$ with $\chi(1)=15$ and $\chi(2B)=7$. If $U$ affords $\chi$ and $x\in2B$, then $x^2=1$, so its eigenvalues on $U$ are $\pm1$. Hence
\[
\dim U^x=\frac{15+\chi(x)}2=11,
\]
so $\operatorname{codim}_U U^x=4$.
\end{proof}

The third ingredient is Scott's fixed-space inequality.

\begin{lemma}[Scott]\label{lem:Scott}
Let $U$ be a finite-dimensional complex $G$-module, and suppose that $G=\langle g_1,\ldots,g_r\rangle$ with $g_1\cdots g_r=1$. Then
\begin{equation}\label{eq:Scott-general}
\sum_{i=1}^r \operatorname{codim}_U U^{g_i}
\ge
\operatorname{codim}_U U^G+
\operatorname{codim}_{U^*}(U^*)^G.
\end{equation}
In particular, if $U$ is nontrivial and irreducible, then
\[
\sum_{i=1}^r\operatorname{codim}_U U^{g_i}\ge2\dim U.
\]
\end{lemma}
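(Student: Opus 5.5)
The plan is the standard cocycle-counting proof of Scott's inequality. Set
\[
C=\bigoplus_{i=1}^r (g_i-1)U\subseteq U^{\oplus r},\qquad \dim C=\sum_{i=1}^r \operatorname{codim}_U U^{g_i},
\]
since $(g_i-1)U$ is the image of $g_i-1$, whose kernel is $U^{g_i}$. Define the linear map $\psi:C\to U$ by
\[
\psi(v_1,\ldots,v_r)=v_1+g_1v_2+g_1g_2v_3+\cdots+g_1\cdots g_{r-1}v_r .
\]
This is the value on the word $g_1\cdots g_r$ of the crossed homomorphism that takes value $v_i$ on $g_i$. By rank--nullity, $\dim C=\dim\ker\psi+\dim\im\psi$. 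I will bound each term from below separately.

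\emph{Step 1: the kernel.} Let $Z^1(G,U)$ be the space of crossed homomorphisms $\delta:G\to U$, meaning $\delta(gh)=\delta(g)+g\delta(h)$. For each $i$, restrict $\delta$ to the finite cyclic group $\langle g_i\rangle$. We work over $\mathbb C$ with finite groups, so $H^1(\langle g_i\rangle,U)=0$. Hence the restriction is inner, and so $\delta(g_i)\in(g_i-1)U$. Thus $\delta\mapsto(\delta(g_1),\ldots,\delta(g_r))$ maps $Z^1(G,U)$ into $C$. Since $g_1\cdots g_r=1$, its image lies in $\ker\psi$. The map is injective because the $g_i$ generate $G$. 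Also $H^1(G,U)=0$, so $Z^1(G,U)=B^1(G,U)\cong U/U^G$. Therefore $\dim\ker\psi\ge\operatorname{codim}_U U^G$.

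\emph{Step 2: the image.} Let $f\in U^*$ annihilate $\im\psi$; I claim $f$ is $G$-invariant. Take $v_2=\cdots=v_r=0$. Then $f\circ(g_1-1)=0$, so $fg_1=f$ for the natural right action. Next take only $v_2$ nonzero. This gives $f\circ g_1(g_2-1)=0$, and since $fg_1=f$ we get $fg_2=f$. Inductively, $f$ is fixed by every $g_i$, hence by $G$. So the annihilator of $\im\psi$ lies in $(U^*)^G$, giving $\dim\im\psi\ge\operatorname{codim}_{U^*}(U^*)^G$.

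Adding the two bounds gives \eqref{eq:Scott-general}. For the second statement, let $U$ be nontrivial and irreducible. Then $U^G$ is a proper submodule, so $U^G=0$, and likewise $(U^*)^G=0$ since $U^*$ is also nontrivial irreducible. Both codimensions therefore equal $\dim U$. The only step needing care is Step 1: one must check that cocycle values actually land in $(g_i-1)U$. This is where the vanishing of cohomology of finite groups in characteristic zero is used.
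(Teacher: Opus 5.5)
Your proof is correct. It differs from the paper's in scope rather than in method. The paper does not prove the general inequality at all: it cites Scott's Theorem~1 and only checks the ``in particular'' clause, using $U^G=0=(U^*)^G$ for nontrivial irreducible $U$. That check is exactly your final paragraph. What you add is a self-contained derivation-counting proof of the inequality itself, which is essentially Scott's original argument. The citation buys brevity and full generality; your version buys self-containment.

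One simplification is worth making: Step~1 needs no cohomology vanishing. Use only the inner derivations $d_v(g)=(g-1)v$.
\begin{itemize}
\item Their values $d_v(g_i)=(g_i-1)v$ lie in $(g_i-1)U$ by definition.
\item The map $v\mapsto\bigl((g_1-1)v,\ldots,(g_r-1)v\bigr)$ has kernel exactly $U^G$, because the $g_i$ generate $G$.
\item Its image lies in $\ker\psi$, since $\psi$ of it telescopes to $(g_1\cdots g_r-1)v=0$.
\end{itemize}
This gives $\dim\ker\psi\ge\operatorname{codim}_U U^G$ over any field and for any group, which is the generality of Scott's theorem. In your version, the appeal to $H^1(\langle g_i\rangle,U)=0$ is redundant anyway: once you invoke $H^1(G,U)=0$, every cocycle is already inner, so its values on $g_i$ automatically lie in $(g_i-1)U$.

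Your reliance on finiteness and characteristic zero is harmless here, since $G=\Sp(6,\F_2)$ is finite and $U$ is complex. Step~2 and the rank--nullity conclusion are fine as written.
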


\begin{proof}
The first inequality is Scott's theorem \cite[Theorem~1]{Scott}. If $U$ is nontrivial and irreducible, then $U^G=0$; the same holds for the dual module $U^*$, so the right-hand side of \eqref{eq:Scott-general} equals $2\dim U$.
\end{proof}

These are the only symplectic and finite-group facts used in the proof.

\section{The genus-three obstruction}

Consider the homomorphisms
\begin{equation}\label{eq:Phi2}
\MCG(\Sigma_3)\xrightarrow{\ \Phi\ }\Sp(6,\Z)\xrightarrow{\ \pi_2\ }\Sp(6,\F_2),
\qquad \Phi_2:=\pi_2\circ\Phi.
\end{equation}
Here $\Phi$ is the standard symplectic representation induced by the action on $H_1(\Sigma_3;\Z)$ after choosing a symplectic basis; see \cite[Chapter~6]{FarbMargalit}. Both maps are surjective, and hence so is $\Phi_2$. We first determine the possible images of involutions under $\Phi_2$.

\begin{lemma}\label{lem:image-involution}
Let $f\in\MCG(\Sigma_3)$ be an involution. Then either $\Phi_2(f)=I_6$, or $\Phi_2(f)$ is conjugate in $\Sp(6,\F_2)$ to
\begin{equation}\label{eq:X}
X=\begin{pmatrix}P&0\\0&P\end{pmatrix},
\qquad
P=\begin{pmatrix}
1&0&0\\
0&0&1\\
0&1&0
\end{pmatrix}.
\end{equation}
\end{lemma}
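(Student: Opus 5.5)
The plan is to push $f$ down to $\Sp(6,\Z)$, put its image in normal form there using Lemma~\ref{lem:Braden}, and then reduce modulo $2$. Set $S=\Phi(f)\in\Sp(6,\Z)$. Since $f^2=1$, we have $S^2=I_6$. (It can happen that $S=I_6$; this is possible only in genus $\le 2$ via the hyperelliptic involution. It is not excluded here, but that case is harmless.) By Lemma~\ref{lem:Braden} there is $C\in\Sp(6,\Z)$ with $CSC^{-1}=S_0=\begin{pmatrix}a&0\\0&a\end{pmatrix}$, where $a=I_p\oplus(-I_m)\oplus Q^{\oplus t}$ and $p+m+2t=3$.

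Next we apply $\pi_2$. Since $\pi_2$ is a homomorphism, $\Phi_2(f)=\pi_2(S)$ is conjugate in $\Sp(6,\F_2)$, by $\pi_2(C)$, to $\pi_2(S_0)$. Modulo $2$ we have $-1\equiv1$, so $\pi_2(a)=I_{p+m}\oplus Q^{\oplus t}$. The condition $p+m+2t=3$ forces $t\in\{0,1\}$.
\begin{itemize}
\item If $t=0$, then $\pi_2(a)=I_3$, so $\pi_2(S_0)=I_6$. Being conjugate to the identity, $\Phi_2(f)$ equals $I_6$.
\item If $t=1$, then $\pi_2(a)=I_1\oplus Q$, and this is exactly the matrix $P$ of \eqref{eq:X}. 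Hence $\pi_2(S_0)=X$ and $\Phi_2(f)$ is conjugate to $X$.
\end{itemize}

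The argument is essentially immediate. The only point needing care is bookkeeping: the block form in Lemma~\ref{lem:Braden} must be written in the same symplectic basis convention as $\Phi$, namely the ordering $(a_1,a_2,a_3,b_1,b_2,b_3)$ with form $\begin{pmatrix}0&I\\-I&0\end{pmatrix}$. Then $\pi_2(C)$ really lies in $\Sp(6,\F_2)$ and $X$ is symplectic. As a check, $X^{T}JX=J$ holds because $P^{T}P=I_3$.

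I also expect to record, for later use, that $X$ is a genuine involution in class $2B$. This can be done by computing $|C_G(X)|=4608$, or by noting that $X-I$ has rank $2$ and $X$ is not a transvection-type element, which identifies its class via ATLAS data. That identification is the step most likely to need real work, but it is not part of the present statement.
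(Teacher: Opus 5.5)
Your proof is correct and is the same as the paper's: you apply Braden's normal form (Lemma~\ref{lem:Braden}) to $S=\Phi(f)$, reduce modulo $2$ using $-1\equiv1$, and split on $t\in\{0,1\}$, with $t=1$ giving $\pi_2(a)=(1)\oplus Q=P$. Two side remarks are inaccurate but not load-bearing. First, $S=I_6$ never occurs for $f\ne1$ in any genus: the hyperelliptic involution acts as $-I$ on $H_1$, and the Torelli group is torsion-free; what does occur is $\Phi_2(f)=I_6$, for example from $S=-I_6$. Second, for the later class identification, $\operatorname{rank}(X-I_6)=2$ does not by itself separate $2B$ from $2C$, so you would also need $\langle v,(X-I_6)v\rangle=0$ for all $v$, or the centralizer count $4608$ that the paper uses.
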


\begin{proof}
Set $S=\Phi(f)\in\Sp(6,\Z)$. Since $f^2=1$, we have $S^2=I_6$, so Lemma~\ref{lem:Braden} applies. Because $p+m+2t=3$, one has $t=0$ or $1$. If $t=0$, the reduction of $a$ modulo $2$ is $I_3$, and hence $\Phi_2(f)=I_6$. If $t=1$, then $a=\varepsilon\oplus Q$ for $\varepsilon\in\{1,-1\}$, whose reduction modulo $2$ is $P$. Thus $\Phi_2(f)$ is conjugate to the matrix $X$ in \eqref{eq:X}.
\end{proof}

We next identify the conjugacy class of $X$. Let $V=\F_2^6$ with symplectic basis $e_1,e_2,e_3,f_1,f_2,f_3$, where $\langle e_i,e_j\rangle=\langle f_i,f_j\rangle=0$ and $\langle e_i,f_j\rangle=\delta_{ij}$. With respect to this basis, $X$ fixes $e_1,f_1$ and interchanges $e_2,e_3$ and $f_2,f_3$.

\begin{lemma}\label{lem:centralizer-stabilizer}
Put $N=X-I_V$ and $W=\im N$. Then $W=\operatorname{span}_{\F_2}\{e_2+e_3,f_2+f_3\}$ is a two-dimensional totally isotropic subspace of $V$, and
\[
C_{\Sp(6,\F_2)}(X)=\Stab_{\Sp(6,\F_2)}(W).
\]
\end{lemma}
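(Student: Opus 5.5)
The plan is to compute $N$ explicitly, to get the inclusion $C\subseteq\Stab$ from the fact that commuting with $X$ means commuting with $N$, and to get the reverse inclusion by showing that $N$ is determined by $W$ together with a bilinear form that every element of $\Stab(W)$ automatically preserves. An order count will serve as a cross-check.

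\textbf{Computing $N$ and $W$.} From the description of $X$ in the basis $e_1,e_2,e_3,f_1,f_2,f_3$ we get
\[
Ne_1=Nf_1=0,\qquad Ne_2=Ne_3=e_2+e_3,\qquad Nf_2=Nf_3=f_2+f_3 .
\]
Hence $W=\im N=\operatorname{span}\{e_2+e_3,f_2+f_3\}$ is two-dimensional, and $N^2=0$. For isotropy, $\langle e_2+e_3,f_2+f_3\rangle=1+1=0$, and over $\F_2$ the alternating form vanishes on each vector. So $W$ is totally isotropic.

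\textbf{The inclusion $C\subseteq\Stab$.} This is formal. If $g\in\Sp(6,\F_2)$ commutes with $X$, then it commutes with $N=X-I_V$. Therefore $gW=gNV=NgV=W$.

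\textbf{The reverse inclusion: setting up the form.} This is the main step. Since $X\in\Sp$ and $X=X^{-1}$, we have $\langle Xx,y\rangle=\langle x,Xy\rangle$. So $N$ is self-adjoint for $\langle\,,\rangle$, and consequently $\ker N=(\im N)^\perp=W^\perp$. Define
\[
B(x,y)=\langle x,Ny\rangle .
\]
Self-adjointness makes $B$ symmetric. Its radical is $\ker N=W^\perp$, so it descends to a nondegenerate form on the two-dimensional quotient $V/W^\perp$. I will then check that $B$ is alternating. By symmetry and bilinearity, $B(x+y,x+y)=B(x,x)+B(y,y)$, so it suffices to verify $B(x,x)=0$ on basis vectors, which is a direct computation; for example, $\langle e_2,e_2+e_3\rangle=0$. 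Over $\F_2$, a two-dimensional space carries exactly one nonzero alternating form. That form is the determinant, and it is invariant under all of $GL_2(\F_2)$, because every invertible $2\times2$ matrix over $\F_2$ has determinant $1$.

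\textbf{The reverse inclusion: conclusion.} Take $g\in\Stab(W)$. Then $g$ preserves $W^\perp$ and so acts on $V/W^\perp$; by the previous paragraph it preserves $B$. For all $x,y$,
\[
\langle x,gNg^{-1}y\rangle=\langle g^{-1}x,Ng^{-1}y\rangle=B(g^{-1}x,g^{-1}y)=B(x,y)=\langle x,Ny\rangle .
\]
Because $\langle\,,\rangle$ is nondegenerate, $gNg^{-1}=N$, and therefore $g$ commutes with $X$.

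\textbf{Cross-check by counting.} The number of totally isotropic $2$-subspaces of $V$ is $63\cdot30/6=315$, because an ordered basis consists of any nonzero $v$ followed by any of the $30$ vectors in $v^\perp\setminus\langle v\rangle$. By Witt's theorem, $\Sp(6,\F_2)$ acts transitively on these subspaces, so
\[
|\Stab(W)|=1451520/315=4608 .
\]
Together with the equality just proved, this gives $|C(X)|=4608$, so $X\in 2B$, consistent with the ATLAS data. If the form argument had a gap, this count would serve as an alternative route to the reverse inclusion, provided $|C(X)|=4608$ were known independently.
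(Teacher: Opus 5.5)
Your proof is correct and essentially the paper's argument. The computation of $N$ and $W$ and the formal inclusion $C\subseteq\Stab$ are the same, and your reverse inclusion rests on the same fact, that every element of $\operatorname{GL}_2(\F_2)$ has determinant $1$: the paper writes $N(v)=\langle v,w_2\rangle w_1+\langle v,w_1\rangle w_2$ and expands to get $gNg^{-1}=(ad+bc)N=N$, while you phrase it coordinate-free as $\operatorname{GL}_2(\F_2)$-invariance of the alternating form $B(x,y)=\langle x,Ny\rangle=\langle x,w_1\rangle\langle y,w_2\rangle+\langle x,w_2\rangle\langle y,w_1\rangle$ on $V/W^\perp$, which is the same object.
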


\begin{proof}
Set $w_1=e_2+e_3$ and $w_2=f_2+f_3$. The action of $X$ gives $N(e_1)=N(f_1)=0$, $N(e_2)=N(e_3)=w_1$, and $N(f_2)=N(f_3)=w_2$. Thus $W=\operatorname{span}_{\F_2}\{w_1,w_2\}$, and the symplectic products among $w_1,w_2$ all vanish, so $W$ is totally isotropic.

If $g\in C_{\Sp(6,\F_2)}(X)$, then $gN=Ng$, hence $g(W)=W$. Thus every element of the centralizer stabilizes $W$, so $C_{\Sp(6,\F_2)}(X)\subseteq\Stab_{\Sp(6,\F_2)}(W)$.

Conversely, for $v=\sum_{i=1}^3\alpha_i e_i+\sum_{i=1}^3\beta_i f_i$ one has
\begin{equation}\label{eq:Nformula}
N(v)=\langle v,w_2\rangle w_1+\langle v,w_1\rangle w_2.
\end{equation}
Let $g\in\Stab_{\Sp(6,\F_2)}(W)$, and write $g(w_1)=aw_1+cw_2$ and $g(w_2)=bw_1+dw_2$. Since $g|_W$ is invertible, $ad+bc=1$. As $g$ preserves the symplectic form, \eqref{eq:Nformula} gives, for every $v\in V$,
\[
\begin{aligned}
gNg^{-1}(v)
&=\langle v,g(w_2)\rangle g(w_1)
 +\langle v,g(w_1)\rangle g(w_2)\\
&=(ad+bc)\bigl(\langle v,w_2\rangle w_1
 +\langle v,w_1\rangle w_2\bigr)\\
&=N(v).
\end{aligned}
\]
Thus $gNg^{-1}=N$, and hence $gXg^{-1}=X$.
\end{proof}

\begin{lemma}\label{lem:2B}
If $f\in\MCG(\Sigma_3)$ is an involution, then $\Phi_2(f)$ is either trivial or belongs to the class $2B$ of $\Sp(6,\F_2)$.
\end{lemma}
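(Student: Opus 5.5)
By Lemma~\ref{lem:image-involution}, $\Phi_2(f)$ is either $I_6$ or conjugate to the matrix $X$ in \eqref{eq:X}. So it suffices to show that $X$ lies in $2B$. By the table in Section~2, this is equivalent to computing $|C_{\Sp(6,\F_2)}(X)|=4608$, since the four involution classes have distinct centralizer orders. Lemma~\ref{lem:centralizer-stabilizer} turns this into an orbit count: $C_{\Sp(6,\F_2)}(X)=\Stab_{\Sp(6,\F_2)}(W)$, where $W$ is a totally isotropic $2$-plane.

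By Witt's theorem, $\Sp(6,\F_2)$ acts transitively on totally isotropic $2$-dimensional subspaces. Hence
\[
|\Stab(W)|=\frac{|\Sp(6,\F_2)|}{\#\{\text{totally isotropic 2-planes in }\F_2^6\}}.
\]
I will count these planes via ordered bases $(w_1,w_2)$ with $\langle w_1,w_2\rangle=0$. There are $63$ choices for $w_1\neq0$. The vector $w_2$ must lie in $w_1^\perp$, which has dimension $5$ and so $32$ elements, and must avoid the span of $w_1$. That leaves $32-2=30$ choices. Each plane has $|GL_2(\F_2)|=6$ ordered bases, so the number of planes is
\[
\frac{63\cdot 30}{6}=315.
\]
Then $|\Stab(W)|=1451520/315=4608$. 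This matches the $2B$ centralizer order, so $X\in 2B$, and so is every conjugate of $X$.

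The main point to check is the transitivity claim, since it is what makes the quotient formula valid. It follows from Witt's extension theorem: an isometry between two totally isotropic $2$-planes extends to a symplectic automorphism of the whole space. Alternatively, one can directly extend a basis of $W$ to a symplectic basis of $V$. The rest is arithmetic: I check $315\cdot 4608=1451520$. Since $4608$ is distinct from $23040$, $1536$ and $384$, the identification of $X$ with $2B$ is unambiguous.
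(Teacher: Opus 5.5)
Your proposal is correct and follows the paper's proof essentially step for step. You reduce to $X$ via Lemma~\ref{lem:image-involution} and identify $C_{\Sp(6,\F_2)}(X)$ with $\Stab(W)$ via Lemma~\ref{lem:centralizer-stabilizer}. You then count the $63\cdot 30/6=315$ totally isotropic $2$-planes using Witt transitivity, and match $1451520/315=4608$ with the ATLAS centralizer order of $2B$.
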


\begin{proof}
By Lemma~\ref{lem:image-involution}, it remains to identify the class of $X$. Let $\mathcal I$ be the set of two-dimensional totally isotropic subspaces of $V$. By Witt's lemma, $\Sp(6,\F_2)$ acts transitively on $\mathcal I$. Each member of $\mathcal I$ has $|\operatorname{GL}(2,2)|=6$ ordered bases. There are $63$ choices for the first basis vector and, once it is fixed, $2^5-2=30$ choices for the second in $u^\perp\setminus\operatorname{span}_{\F_2}\{u\}$. Thus $|\mathcal I|=63\cdot30/6=315$. Since $|\Sp(6,\F_2)|=1451520$, orbit--stabilizer gives $|\Stab_{\Sp(6,\F_2)}(W)|=4608$. By Lemma~\ref{lem:centralizer-stabilizer}, $|C_{\Sp(6,\F_2)}(X)|=4608$, and the ATLAS data in Section~2 identify $X$ as an element of $2B$.
\end{proof}

\begin{lemma}\label{lem:Sp62-obstruction}
The group $\Sp(6,\F_2)$ cannot be generated by three elements of $\{1\}\cup2B$.
\end{lemma}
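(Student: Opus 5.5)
Suppose, for contradiction, that $G=\Sp(6,\F_2)=\langle s_1,s_2,s_3\rangle$ with each $s_i\in\{1\}\cup2B$. The plan is to feed this into Lemma~\ref{lem:Scott} with the $15$-dimensional module $U$ of Lemma~\ref{lem:2B-fixed-space}. Scott's inequality needs generators whose product is $1$, so we append the element $g_4=(s_1s_2s_3)^{-1}$. Then $G=\langle s_1,s_2,s_3,g_4\rangle$ and $s_1s_2s_3g_4=1$.

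Since $U$ is a nontrivial irreducible complex $G$-module, Lemma~\ref{lem:Scott} gives
\[
\sum_{i=1}^3\operatorname{codim}_U U^{s_i}+\operatorname{codim}_U U^{g_4}\ge 2\dim U=30.
\]
We bound each term on the left.
\begin{itemize}
\item For each $i$, either $s_i=1$, so its codimension is $0$, or $s_i\in2B$, so its codimension is $4$ by Lemma~\ref{lem:2B-fixed-space}. Hence the first sum is at most $12$.
\item Trivially $\operatorname{codim}_U U^{g_4}\le\dim U=15$.
\end{itemize}
The left-hand side is therefore at most $27<30$, a contradiction. This shows that no such triple generates $G$.

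The only point needing care is which form of Scott's inequality to use. I will not apply it to $s_1,s_2,s_3$ directly, because their product need not be trivial. The fourth element $g_4$ absorbs this, and we estimate it crudely by $\dim U$. The arithmetic $3\cdot4+15=27<30$ is exactly what makes the argument close. So the main check is that the numerical data are right: $\chi(1)=15$ and $\chi(2B)=7$, which give codimension $4$. No finer information about $g_4$ is needed.

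One could also remark that Scott's inequality applies verbatim when some $s_i$ are trivial, because the generation hypothesis is unaffected by including them. So the cases $s_i=1$ need no separate treatment; they only lower the sum further.
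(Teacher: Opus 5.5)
Your proposal is correct and matches the paper's proof step for step. You append $(s_1s_2s_3)^{-1}$ so that Scott's inequality applies to the $15$-dimensional irreducible module $U$, and then bound the left-hand side by $4+4+4+15=27<30$; the paper likewise handles trivial $s_i$ through the inequality $\operatorname{codim}_U U^{x_i}\le 4$.
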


\begin{proof}
Let $U$ be the $15$-dimensional irreducible complex representation recorded in Section~2. Suppose that $\Sp(6,\F_2)=\langle x_1,x_2,x_3\rangle$ with $x_i\in\{1\}\cup2B$. Then $\operatorname{codim}_U U^{x_i}\le4$ for $1\le i\le3$. Set $x_4=(x_1x_2x_3)^{-1}$. Since $x_1x_2x_3x_4=1$ and these four elements generate $\Sp(6,\F_2)$, Scott's inequality gives
\[
30=2\dim U\le\sum_{i=1}^4\operatorname{codim}_U U^{x_i}.
\]
On the other hand, $\operatorname{codim}_U U^{x_4}\le15$, so the right-hand side is at most $4+4+4+15=27$, a contradiction.
\end{proof}

\begin{proof}[Proof of Theorem~\ref{thm:main}]
Suppose, to the contrary, that $\MCG(\Sigma_3)=\langle f_1,f_2,f_3\rangle$ with $f_i^2=1$. Since $\Phi_2$ is surjective, the three elements $\Phi_2(f_1),\Phi_2(f_2),\Phi_2(f_3)$ generate $\Sp(6,\F_2)$. By Lemma~\ref{lem:2B}, each lies in $\{1\}\cup2B$, contrary to Lemma~\ref{lem:Sp62-obstruction}. Thus $\irank\MCG(\Sigma_3)\ge4$. Korkmaz proved that four involutions generate $\MCG(\Sigma_g)$ for every $g\ge3$ \cite[Theorem~1]{Korkmaz}, so
\[
\irank\MCG(\Sigma_3)=4.
\]
\end{proof}

\section*{Declaration on the use of AI}
AI-assisted tools were used only for language polishing. The author is fully responsible for the content of the manuscript.

\end{document}